\newtheorem{theorem}{Theorem}
\newtheorem{lemma}[theorem]{Lemma}
\newtheorem{corollary}[theorem]{Corollary}
\theoremstyle{definition}
\theoremstyle{remark}
\numberwithin{equation}{section}
\def\ba{{\mathbf a}}
\def\bn{{\mathbf n}}
\def\bw{{\mathbf w}}
\def\bx{{\mathbf x}}
\def\by{{\mathbf y}}
\def\bz{{\mathbf z}}
\def\calH{{\mathcal H}}
\def\calX{{\mathcal X}}
\def\N{\mathbb N}
\def\Q{\mathbb Q}
\def\R{\mathbb R}
\def\T{\mathbb T}
\def\Z{\mathbb Z}
\def\frc{{\mathfrak c}}
\def\alp{{\alpha}}
\def\eps{\varepsilon}
\def\sig{{\sigma}}
\def\balp{{\boldsymbol \alpha}}
\def\bgam{\boldsymbol \gamma}
\def\d{{\partial}}
\def\bzer{\boldsymbol 0}
\def\le{\leqslant} \def\ge{\geqslant}
\def\d{{\,{\rm d}}}
\begin{document}
\title{On the inhomogeneous Vinogradov system}
\author[Julia Brandes]{Julia Brandes}
\address{Mathematical Sciences, University of Gothenburg and Chalmers Institute of 
Technology, 412 96 G\"oteborg, Sweden}
\email{brjulia@chalmers.se}

\author[Kevin Hughes]{Kevin Hughes}
\address{School of Mathematics, University of Bristol, Fry Building, Woodland Road, Bristol BS8 1UG, UK}
\email{khughes.math@gmail.com}

\subjclass[2020]{11D45 (11D85, 11P05, 11L15, 42B05).}

\begin{abstract}
	We show that the system of equations 
	\begin{align*}
		\sum_{i=1}^s (x_i^j-y_i^j) = a_j \qquad (1 \le j \le k)
	\end{align*}	
	has appreciably fewer solutions in the subcritical range $s < k(k+1)/2$ than its homogeneous counterpart, provided that $a_\ell \neq 0$ for some $\ell \le k-1$. Our methods use Vinogradov's mean value theorem in combination with a shifting argument. 
\end{abstract}

\maketitle

\section{Introduction}\label{S1}

Sitting squarely at the interface of harmonic analysis and analytic number theory, systems of diagonal equations have long attracted a significant amount of attention. In particular, thanks to the resolution of the main conjecture associated with Vinogradov's mean value theorem by Wooley \cite{W:EC3} and Bourgain, Demeter and Guth \cite{BDG} (see also \cite{W:NEC}), we now have essentially sharp upper bounds for the number of integer solutions of systems of equations of Vinogradov type. 

For integral $s, k$ denote by  $J_{s,k}(X)$ the number of solutions $\bx, \by \in \Z^s \cap [-X,X]^s$ satisfying 
\begin{align}\label{V-sys}
	\sum_{i=1}^s (x_i^j-y_i^j) = 0 \qquad (1 \le j \le k),
\end{align}
although we remark for future reference that by transitioning to exponential sums the quantity $J_{s,k}(X)$ may be defined also for non-integer positive $s$, see \eqref{V-def}. 
In this notation, \cite[Theorem~1.1]{BDG} (see also \cite[Corollary~1.3]{W:NEC}) shows that for all positive numbers $s$ and all $k \in \N$ one has
\begin{align}\label{VMVT}
	J_{s,k}(X) \ll X^{\eps} (X^s + X^{2s-k(k+1)/2}). 
\end{align}

The second term in \eqref{VMVT}, dominant when $s > k(k+1)/2$, conforms with the circle method heuristic which states that a system of equations in $n$ variables and of total degree $K$ should have roughly $\sim c X^{n-K}$ solutions over the integers of size at most $X$, where the factor $c$ encodes the density of solutions of the underlying system over all completions of $\Q$. Indeed, in the case of \eqref{V-sys} such an asymptotic formula is readily established for $s > k(k+1)/2$ by a classical application of the Hardy--Littlewood method (e.g. \cite[Chapter 7]{V:HL}). Moreover, these methods are robust with respect to small modifications to the system \eqref{V-sys} such as non-trivial coefficients or a non-vanishing expression on the right hand side of \eqref{V-sys}. 

When $s$ is smaller than $k(k+1)/2$, the circle method heuristic fails for \eqref{V-sys}, as we have to allow for the existence of diagonal solutions in which  the variables $(x_1, x_2, \ldots, x_s)$ are a permutation of $(y_1, y_2,\ldots, y_s)$. The contribution of these solutions amounts to $s!X^s$, so it is natural to expect an asymptotic formula of the shape 
\begin{align}\label{VMVT-paucity}
	J_{s,k}(X) \sim s!X^s
\end{align} 
to hold in the subcritical range. Adventurous souls might even go as far as to conjecture that the non-diagonal contribution to the mean value $J_{s,k}(X)$ should be no larger than $O(X^\eps(1 + X^{2s-k(k+1)/2}))$, based on an expectation that the circle method heuristic should continue to be valid in that setting. Somewhat frustratingly, however, we are able to establish \eqref{VMVT-paucity} only in the range $s \le k+1$ (Vaughan and Wooley \cite[Theorem~1]{VW}), which falls far short of the conjectured range $s < k(k+1)/2$. 

The purpose of this note is to study an inhomogeneous version of the system \eqref{V-sys} that does not allow for diagonal contributions. For $s, k \in \N$ and $a_1, \ldots, a_k \in \Z$ write $J_{s,k}(X;\ba)$ for the number of $\bx, \by \in \Z^s \cap [-X,X]^s$ satisfying the system of equations 
\begin{align}\label{IV-sys}
	\sum_{i=1}^s (x_i^j-y_i^j) = a_j \qquad (1 \le j \le k).
\end{align}
Systems of this or similar types have recently arisen in a range of different contexts, including for instance \cite{CKMS,DHV,MS}. It follows easily by a standard application from the triangle inequality that 
\begin{align}\label{triv-bd}
	J_{s,k}(X;\ba) \le J_{s,k}(X) \ll X^{s+\eps}
\end{align} 
in the subcritical range. We show that for most $\ba$ we can do better. 

\begin{theorem}\label{MT}
	Suppose that $k \ge 2$ and $\ba \in \Z^k \setminus \{\bzer\}$. Let $\ell\in \{1, \ldots, k\}$ be the smallest integer for which $a_{\ell} \neq 0$. Then for any integer $s < \frac12 k(k+1)$ and any $\eps > 0$ we have 
	\begin{align}\label{IVMVT}
		J_{s,k}(X; \ba)\ll X^{s-1/2+\eps} + X^{s- \eta_{s,k}(\ell)  +\eps},
	\end{align} 
	where 
	\begin{align*}
		\eta_{s,k}(\ell) = \frac{(k-\ell)(k-\ell+1)}{2} \left(1-\frac{2s}{k(k+1)}\right).
	\end{align*}
\end{theorem}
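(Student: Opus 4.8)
\medskip
\noindent\emph{Proof strategy.}
The plan is to exploit the inhomogeneity through a translation of the underlying variables. Writing $e(\beta)=e^{2\pi i\beta}$ and $f(\balp)=\sum_{|x|\le X}e(\alpha_1x+\cdots+\alpha_kx^k)$, orthogonality gives $J_{s,k}(X;\ba)=\int_{\T^k}|f(\balp)|^{2s}e(-\ba\cdot\balp)\,d\balp$. For an integer shift $t$, the substitution $x_i\mapsto x_i+t$, $y_i\mapsto y_i+t$ is a bijection from the solutions counted by $J_{s,k}(X;\ba)$ into the solutions in $[-2X,2X]$ of the system with right-hand side $a_j^{(t)}=\sum_{m=1}^{j}\binom{j}{m}t^{j-m}a_m$. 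Since $a_1=\cdots=a_{\ell-1}=0$, one has $a_j^{(t)}=a_j$ for $j\le\ell$, whereas for $j>\ell$ the quantity $a_j^{(t)}$ is a polynomial in $t$ of degree $j-\ell$ with leading coefficient $\binom{j}{\ell}a_\ell\neq0$. Hence $J_{s,k}(X;\ba)\le J_{s,k}(2X;\ba^{(t)})$ for all $|t|\le X$, and averaging over $1\le t\le H$ (with $H\le X$ to respect the enlarged box) yields
\[
J_{s,k}(X;\ba)\le\frac1H\sum_{t=1}^{H}J_{s,k}(2X;\ba^{(t)})=\frac1H\int_{\T^k}|f(\balp)|^{2s}g(\balp)\,d\balp ,\qquad g(\balp)=\sum_{t=1}^{H}e\!\Big(-\sum_{j=\ell}^{k}a_j^{(t)}\alpha_j\Big).
\]

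The crucial observation is that, regarded as a function of $t$, the phase $\sum_{j}a_j^{(t)}\alpha_j$ is a polynomial of degree $d:=k-\ell$ whose coefficients of $t,\dots,t^{d}$ form an invertible (triangular) linear image of $(\alpha_{\ell+1},\dots,\alpha_k)$, the leading term being $\binom{k}{\ell}a_\ell\alpha_k t^{d}$. Thus $|g(\balp)|$ is the modulus of a degree-$d$ Weyl sum of length $H$, and its distribution is governed by Vinogradov's mean value theorem in degree $d$, whose critical exponent is exactly $r:=\tfrac12(k-\ell)(k-\ell+1)$, the quantity appearing in $\eta_{s,k}(\ell)$. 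The complementary input is \eqref{VMVT} in degree $k$: since $s<\tfrac12k(k+1)$ we have $\int_{\T^k}|f|^{2s}\ll X^{s+\eps}$, and indeed every Fourier coefficient satisfies $J_{s,k}(2X;\bn)\le\int_{\T^k}|f|^{2s}\ll X^{s+\eps}$ uniformly in $\bn$. The strategy is to feed these two estimates into H\"older's inequality, using the degree-$d$ mean value to harvest the cancellation in $g$ and the subcritical degree-$k$ bound to control $|f|^{2s}$; the weight $1-2s/(k(k+1))$ should emerge as the H\"older exponent measuring the distance $\tfrac12k(k+1)-s$ of $s$ below the critical line, scaled by $r$.

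I expect the two terms to arise from two ways of handling $g$. A second-moment estimate, applying Cauchy--Schwarz in $t$ and noting that each quadruple of variables is consistent with at most one shift once $a_\ell\neq0$, bounds $\sum_t J_{s,k}(2X;\ba^{(t)})^2$ by the homogeneous mean value $J_{2s,k}(2X)$; its diagonal term then produces the universal saving $X^{s-1/2+\eps}$. The bound $X^{s-\eta_{s,k}(\ell)+\eps}$ should instead come from a higher $2w$-th moment of $g$, where the off-diagonal term of the degree-$d$ mean value, weighted by the subcriticality factor, furnishes the exponent $\eta_{s,k}(\ell)$. The principal obstacle is that a naive decoupling of the low frequencies $(\alpha_1,\dots,\alpha_\ell)$ from the high frequencies on which $g$ depends forces the appearance of the degree-$\ell$ mean value $J_{s,\ell}(2X)$, which is far too large once $s>\tfrac12\ell(\ell+1)$ and destroys the bound; the degree-$k$ subcriticality must therefore be retained throughout. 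The mechanism that makes this possible is precisely that $a_\ell\neq0$ drives the shifted data $(a_{\ell+1}^{(t)},\dots,a_k^{(t)})$ away from the origin, so that $|f|^{2s}$ cannot concentrate on the major arcs of the degree-$d$ Weyl sum where $|g|$ is comparable to $H$. Making this interaction quantitative—splitting the $\balp$-integral by the size of $|g|$, bounding the minor-arc part by Weyl's inequality against $\int|f|^{2s}\ll X^{s}$ and the major-arc part by the uniform Fourier bound together with the small, $r$-controlled measure of the arcs, and then optimising over $H$ and the moment $2w$—so as to land the exact exponent $\eta_{s,k}(\ell)$ rather than a lossy surrogate is, I anticipate, the technical heart of the argument.
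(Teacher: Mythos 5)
Your setup coincides exactly with the paper's proof: the shift-and-average step is precisely Lemma~\ref{J<H}, your identity $J_{s,k}(X;\ba)\le H^{-1}\int_{\T^k}|f(\balp;2X)|^{2s}g(\balp)\,\d\balp$ is the paper's \eqref{H-int}, the recognition of $|g|$ as a degree-$(k-\ell)$ Weyl sum via a triangular integer substitution is the paper's reduction of $\int|g|^{2\sig}$ to $J_{\sig,k-\ell}(X)$, and the H\"older weight $1-2s/(k(k+1))$ is the right one. The genuine gap is that you never execute this H\"older step, deferring it as "the technical heart" and sketching in its place machinery that is either unnecessary or incorrect. The paper's finish is a short computation: take $H=X$, apply H\"older once with conjugate exponents $k(k+1)/(2s)$ and $2\sig$, where $\sig=\frac{k(k+1)}{2(k(k+1)-2s)}$, so that
\begin{align*}
\int_{\T^k}|f|^{2s}|g|\,\d\balp \le \left(\int_{\T^k}|f|^{k(k+1)}\,\d\balp\right)^{\frac{2s}{k(k+1)}}\left(\int_{\T^k}|g|^{2\sig}\,\d\balp\right)^{\frac{1}{2\sig}}.
\end{align*}
The first factor is the \emph{critical} mean value $J_{k(k+1)/2,k}(2X)\ll X^{k(k+1)/2+\eps}$ (not the $2s$-th moment you invoke), contributing $X^{s+\eps}$; the second equals $J_{\sig,k-\ell}(X)^{1/(2\sig)}$, where $\sig$ is in general not an integer, which is exactly why one needs the analytic definition \eqref{V-def} of $J$ for real moments. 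Applying \eqref{VMVT} in degree $k-\ell$ gives $J_{\sig,k-\ell}(X)\ll X^{\eps}(X^{\sig}+X^{2\sig-r})$ with $r=\tfrac12(k-\ell)(k-\ell+1)$, and raising to the power $1/(2\sig)$ yields $X^{1/2}+X^{1-r/(2\sig)}$ with $r/(2\sig)=\eta_{s,k}(\ell)$. Thus \emph{both} terms of Theorem~\ref{MT} fall out of this single application: the diagonal term $X^{\sig}$ gives the $X^{s-1/2}$ saving (square-root cancellation on the dummy shift variable), and the off-diagonal term gives $X^{s-\eta_{s,k}(\ell)}$. No arc dissection, Weyl inequality, or optimisation over $H$ and a moment $2w$ is required.

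Moreover, the alternative you sketch for the term $X^{s-1/2+\eps}$ fails in part of the range. Cauchy--Schwarz in $t$ gives $J_{s,k}(X;\ba)\ll H^{-1/2}J_{2s,k}(2X)^{1/2}$, but for $\frac14 k(k+1)<s<\frac12 k(k+1)$ the doubled moment is supercritical, so $J_{2s,k}(2X)\gg X^{4s-k(k+1)/2}$; you are not entitled to keep only its diagonal term, and the honest bound $X^{2s-k(k+1)/4-1/2+\eps}$ is weaker than $X^{s-1/2+\eps}$ there, indeed weaker than the trivial bound \eqref{triv-bd} once $s>\frac14k(k+1)+\frac12$. Finally, the "principal obstacle" you describe is a non-issue: $g$ is independent of $\alpha_1,\ldots,\alpha_{\ell-1}$ and depends on $\alpha_\ell$ only through the unimodular constant $e(-a_\ell\alpha_\ell)$, and since H\"older is applied to the full integral over $\T^k$, no splitting of frequencies (and hence no spurious $J_{s,\ell}$) ever enters; likewise $|f|^{2s}$ and the major arcs of $g$ never need to be played against each other.
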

A brief computation reveals that the first term in \eqref{IVMVT} dominates for $s=\frac12 k(k+1) - 1$ as soon as $a_{\ell} \neq 0$ for some $\ell \le k-\frac12(\sqrt{2k^2+2k+1}-1)$. Hence we have the following simple consequence of Theorem~\ref{MT}.
\begin{corollary}
	Suppose that $k \ge 2$ and assume that $a_\ell \neq 0$ for some 
	\begin{align*}
		\ell \le k-\tfrac12(\sqrt{2k^2+2k+1}-1).
	\end{align*}
	Then for any integer $s \le \frac12 k(k+1)-1$ and any $\eps > 0$ we have 
	\begin{align*}
		J_{s,k}(X; \ba)\ll X^{s-1/2+\eps}.
	\end{align*} 
\end{corollary}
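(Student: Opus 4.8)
The plan is to derive the corollary directly from Theorem~\ref{MT} by checking that, under the stated hypothesis on $\ell$, the first term in \eqref{IVMVT} dominates the second. Write $\ell_0$ for the smallest index with $a_{\ell_0}\neq 0$, as in the theorem, and let $\ell^*$ denote an index with $a_{\ell^*}\neq 0$ and $\ell^* \le k - \tfrac12(\sqrt{2k^2+2k+1}-1)$, which the hypothesis guarantees exists; by minimality $\ell_0 \le \ell^*$. Since $s \le \tfrac12 k(k+1)-1 < \tfrac12 k(k+1)$, Theorem~\ref{MT} applies and yields
\begin{align*}
	J_{s,k}(X;\ba) \ll X^{s-1/2+\eps} + X^{s-\eta_{s,k}(\ell_0)+\eps}.
\end{align*}
Hence it suffices to show that $\eta_{s,k}(\ell_0) \ge \tfrac12$, for then the second exponent does not exceed the first.

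Next I would exploit the two monotonicities visible in the definition of $\eta_{s,k}(\ell)$. The factor $1 - 2s/(k(k+1))$ is nonincreasing in $s$ and nonnegative on the relevant range, so $\eta_{s,k}(\ell_0)$ is smallest at the largest admissible value $s = \tfrac12 k(k+1)-1$; it therefore suffices to verify the bound there. At that value the factor equals $2/(k(k+1))$, giving $\eta_{s,k}(\ell_0) = (k-\ell_0)(k-\ell_0+1)/(k(k+1))$. Secondly, the quantity $(k-\ell)(k-\ell+1)$ is decreasing in $\ell$, and $\ell_0 \le \ell^*$, whence $\eta_{s,k}(\ell_0) \ge \eta_{s,k}(\ell^*)$. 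It thus remains only to confirm that $(k-\ell^*)(k-\ell^*+1) \ge \tfrac12 k(k+1)$.

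The last step is the elementary calculation already advertised in the text. Putting $m = k - \ell^*$, the inequality reads $2m(m+1) \ge k(k+1)$, that is $2m^2 + 2m - (k^2+k) \ge 0$; solving this quadratic in $m$ shows it holds exactly when $m \ge \tfrac12(\sqrt{2k^2+2k+1}-1)$, which rearranges to the hypothesis $\ell^* \le k - \tfrac12(\sqrt{2k^2+2k+1}-1)$. Consequently $\eta_{s,k}(\ell_0)\ge\tfrac12$, the second term in \eqref{IVMVT} is absorbed by the first, and the claimed bound follows. I do not anticipate any genuine obstacle here: the corollary is purely a quantitative unpacking of Theorem~\ref{MT}, and the only care needed is to track the dependence on $s$ (by reducing to the largest admissible $s$) and on $\ell$ (by passing from the given index $\ell^*$ down to the minimal index $\ell_0$ for which the theorem is phrased).
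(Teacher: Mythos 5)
Your proposal is correct and matches the paper's own (very brief) justification: the paper simply remarks that the first term in \eqref{IVMVT} dominates at $s=\tfrac12 k(k+1)-1$ precisely when $\ell \le k-\tfrac12(\sqrt{2k^2+2k+1}-1)$, which is exactly the computation you carry out. Your additional care in reducing to the largest admissible $s$ via monotonicity of $\eta_{s,k}(\ell)$ in $s$, and in passing from the hypothesised index $\ell^*$ to the minimal index $\ell_0$ via monotonicity in $\ell$, correctly fills in the details the paper leaves implicit.
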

Here, one can compute that 
\begin{align*}
	k-\tfrac12(\sqrt{2k^2+2k+1}-1) &= \left(1-\frac{1}{\sqrt 2}\right) \left(k + \frac12 \right) + O(1/k)\\&\approx 0.292...\cdot k + 0.146... + O(1/k). 
\end{align*}

It may be worth pointing out that we do not expect to obtain power savings over the bound \eqref{triv-bd} when $s$ is at or beyond the critical point and $|a_j| \le 2sX^j$ for $1 \le j \le k$, since at that point the contribution from a major arcs analysis will be of size $\gg X^{2s-k(k+1)}$, see e.g. \cite[Chapter~7]{V:HL} for details. In this sense, the range $s \le \frac12k(k+1) -1$ is optimal. 

The proof of Theorem \ref{MT} relies crucially on the absence of translation invariance in the system \eqref{IV-sys}. For this reason, our result becomes gradually weaker as the system acquires larger translation-invariant subsystems, up to the point when $\ba = (0, \ldots, 0, a_k)^t$, where we have $\ell=k$ and $\eta_{s,k}(k)=0$. At this point, the entire system becomes translation-invariant, so we fail to make any progress beyond the trivial bound \eqref{triv-bd}. 

Our strategy transfers to several other settings, but in the interest of a slick presentation we have opted to focus our attention on the inhomogeneous Vinogradov system, since this seems to be by far the most relevant example. However, we discuss some possible extensions of our result as well as some of its limitations in the final section.

\bigskip 
\textbf{Notation.}  We denote the unit torus by $\T = \R /\Z$. Throughout, the letter $\eps$ will be used to denote an arbitrary positive number, and we adopt the convention that whenever it appears in a statement, we assert that the statement holds for all $\eps>0$. Finally, we take $X$ to be a large positive number which, just like the implicit constants in the notations of Landau and Vinogradov, is permitted to depend at most on $s$, $k$ and $\eps$.

\section{Proof of the main theorem}
In our proof we employ a strategy inspired by work of Wooley \cite{Woo2015} to understand systems with incomplete translation-invariant structure.  Our first step is to bound $J_{s,k}(X;\ba)$ in terms of a different quantity which will be easier to handle. Define polynomials $p_j$ by setting 
\begin{align}\label{poly}
	p_j(h) = \sum_{m=1}^{j} \binom{j}{m} a_{m}h^{j-m} \qquad (1 \le j \le k).
\end{align}
Write then $H_{s,k}(X;\ba)$ for the number of $\bz, \bw \in \Z^s \cap [-2X,2X]^s$ and $h \in \Z \cap [-X,X]$ satisfying the system of equations 
\begin{align}\label{s+a}
	\sum_{i=1}^s (z_i^j-w_i^j) = p_j(h) \qquad (1 \le j \le k). 
\end{align}
Then we have the following. 
\begin{lemma}\label{J<H}
	We have 
	\begin{align*}
		J_{s,k}(X;\ba) \ll X^{-1} H_{s,k}(X;\ba).
	\end{align*}
\end{lemma}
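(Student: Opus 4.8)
The plan is to exploit the translation structure of the problem via a shifting argument: the passage from \eqref{IV-sys} to \eqref{s+a} is achieved by translating all variables by a free parameter $h$. Given a solution $(\bx,\by)$ counted by $J_{s,k}(X;\ba)$, I would introduce a shift $h \in \Z \cap [-X,X]$ and set $z_i = x_i + h$ and $w_i = y_i + h$ for $1 \le i \le s$. Since $x_i, y_i \in [-X,X]$ and $\abs{h} \le X$, the shifted variables satisfy $z_i, w_i \in [-2X,2X]$, so the enlarged box in the definition of $H_{s,k}(X;\ba)$ is precisely what accommodates the translation.

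First I would check that this map sends solutions of \eqref{IV-sys} to solutions of \eqref{s+a}. Expanding by the binomial theorem gives
\begin{align*}
	\sum_{i=1}^s (z_i^j - w_i^j) = \sum_{i=1}^s \sum_{m=0}^{j} \binom{j}{m} h^m (x_i^{j-m} - y_i^{j-m}) = \sum_{m=0}^{j} \binom{j}{m} h^m \sum_{i=1}^s (x_i^{j-m} - y_i^{j-m}).
\end{align*}
The inner sum vanishes when $j-m = 0$ and equals $a_{j-m}$ whenever $1 \le j-m \le k$; as $j \le k$ every index is admissible. Re-indexing via $m \mapsto j-m$ and using $\binom{j}{j-m} = \binom{j}{m}$ then reassembles the right-hand side into exactly $p_j(h)$ as defined in \eqref{poly}. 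Hence each pair $\bigl((\bx,\by), h\bigr)$ yields a tuple $(\bz, \bw, h)$ counted by $H_{s,k}(X;\ba)$.

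The counting step is then immediate. The assignment $(\bx, \by, h) \mapsto (\bz, \bw, h)$ is injective, since $h$ is retained as the final coordinate and the original variables are recovered through $x_i = z_i - h$ and $y_i = w_i - h$. As $(\bx, \by)$ ranges over the $J_{s,k}(X;\ba)$ solutions of \eqref{IV-sys} and $h$ over the $\gg X$ integers of $\Z \cap [-X,X]$, the image consists of that many distinct solutions counted by $H_{s,k}(X;\ba)$. Therefore $X \cdot J_{s,k}(X;\ba) \ll H_{s,k}(X;\ba)$, which rearranges to the claim.

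As for the main obstacle, there is no serious analytic difficulty here, since every step is elementary; the only points demanding care are the bookkeeping in the binomial expansion---in particular the observation that the $m=j$ term drops out, so that no spurious $h^j$ survives, while the remaining terms recombine precisely into $p_j$---together with the verification that the shift never forces a variable outside $[-2X,2X]$. The genuine content of the lemma is the structural point that the inhomogeneous system, lacking the translation invariance of \eqref{V-sys}, acquires an extra free parameter $h$ upon shifting; it is this additional averaging that will subsequently be played off against the translation-invariant structure underlying Vinogradov's mean value theorem.
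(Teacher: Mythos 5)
Your proposal is correct and follows essentially the same route as the paper: shift a solution $(\bx,\by)$ of \eqref{IV-sys} by a parameter $h$ with $\abs{h}\le X$, verify via the binomial theorem that the shifted pair satisfies \eqref{s+a} with right-hand side $p_j(h)$, and then sum over the $\gg X$ admissible values of $h$, using that the map $(\bx,\by,h)\mapsto(\bx+h,\by+h,h)$ is injective. The only difference is that you spell out the binomial bookkeeping and the injectivity explicitly, which the paper leaves implicit.
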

\begin{proof}
	Suppose that $(\bx, \by)$ is a solution to \eqref{IV-sys}, and fix a parameter $h \in \Z$. It then follows from the binomial theorem that $(\bx,\by)$ also is a solution of the shifted system 
	\begin{align*}
		\sum_{i=1}^s \left((x_i+h)^j-(y_i+h)^j\right) = p_j(h) \qquad (1 \le j \le k). 
	\end{align*}
	If now $|h| \le X$, then the number of such solutions $\bx, \by \in \Z^s \cap [-X,X]^s$ is certainly no larger than the number of $\bz, \bw \in \Z^s \cap [-2X,2X]^s$ satisfying \eqref{s+a} with that particular value of $h$. The desired conclusion now follows upon summing over all $|h| \le X$. 
\end{proof}

Next, we write $H_{s,k}(X;\ba)$ in terms of mean values over exponential sums. Set 
\begin{align*}
	f_k(\balp;X) = \sum_{|x| \le X} e(\alp_1 x + \ldots + \alp_k x^k) 
\end{align*}
and 
\begin{align}\label{g-def}
	g_k(\balp;X) = \sum_{|x| \le X} e(\alp_1 p_1(h) + \ldots + \alp_k p_k(h)),
\end{align}
and recall that in this notation we have 
\begin{align}\label{V-def}
	J_{s,k}(X) = \int_{\T^k} |f_k(\balp;X)|^{2s} \d \balp.
\end{align}
Similarly, it follows from standard orthogonality relations that we may write 
\begin{align}\label{H-int}
	H_{s,k}(X;\ba) = \int_{\T^k} |f_k(\balp;2X)|^{2s} g_k(-\balp;X) \d \balp.
\end{align}

From the definition of the $p_j$ in \eqref{poly}, we see that $g_k(\balp;X)$ is an exponential sum of degree at most $k-1$; however, its degree may be lower depending on the values of the coefficients $a_1, a_2, \ldots, a_k$. In particular, recalling that $\ell \in \{1, \ldots, k\}$ denotes the smallest integer for which $a_{\ell} \neq 0$, we discern from \eqref{poly} that
\begin{align}\label{deg-p}
	\deg p_j = \max\{0, j-\ell\} \qquad (1 \le j \le k).
\end{align}
Thus, $g_k(\balp;X)$ is an exponential sum containing $k-\ell$ integer polynomials with degrees $1, \ldots,k-\ell$, respectively. 

We now apply H\"older's inequality to \eqref{H-int}, and find that 
\begin{align}\label{H\"older}
	H_{s,k}(X;\ba) \le \left(\int_{\T^k} |f_k(\balp;2X)|^{k(k+1)} \d \balp\right)^{\frac{2s}{k(k+1)}} \left(\int_{\T^k} |g_k(\balp;X)|^{2\sig}\d \balp\right)^{1/(2\sig)},
\end{align}
where we wrote 
\begin{align}\label{sig}
	\sig = \frac{k(k+1)}{2(k(k+1) -2s)}.
\end{align}
Clearly, the first integral in \eqref{H\"older} is $J_{k(k+1)/2,k}(2X)$. Moreover, from \eqref{g-def}, \eqref{poly} and \eqref{deg-p} it follows via an integer change of variables on $\balp$ together with the periodicity of the exponential sums that
\begin{align*}
	\int_{\T^k} |g_k(\balp;X)|^{2\sig}\d \balp = \int_{\T^{k-\ell}} |f_{k-\ell}(\bgam;X)|^{2\sig}\d \bgam = J_{\sig,k-\ell}(X).
\end{align*}
Consequently, upon inserting the bound \eqref{VMVT} from \cite[Theorem~1.1]{BDG}, we conclude that 
\begin{align*}
	H_{s,k}(X;\ba)	&\ll \left(J_{k(k+1)/2,k}(2X)\right)^{\frac{2s}{k(k+1)}} \left(J_{\sig,k-\ell}(X)\right)^{1/(2\sig)}\\
	&\ll X^{\eps} \left(X^{k(k+1)/2}\right)^{\frac{2s}{k(k+1)}} \left(X^\sig + X^{2\sig - (k-\ell)(k-\ell+1)/2}\right)^{1/(2\sig)}\\
	&\ll X^{s + \eps} \left(X^{1/2} + X^{1-(k-\ell)(k-\ell+1)/(4\sig)} \right).
\end{align*}
The proof of Theorem~\ref{MT} is now complete upon recalling Lemma~\ref{J<H} and inserting the value of $\sig$ from \eqref{sig}.

\section{Further discussion: Generalisations and limitations}
It is natural to ask in what ways the proof of Theorem~\ref{MT} may be generalised. 
As mentioned in the introduction, a close reading of our methods reveals that the translation-invariant structure of the system does indeed play a crucial role in our arguments. Consequently, there is little hope to extend our results to lacunary Vinogradov systems without fundamentally different ideas. 

However, the idea underpinning the proof of Theorem~\ref{MT} is quite general, and can be used to bound the number of solutions of the inhomogeneous analogues of other translation-invariant systems, including multidimensional ones such as those considered in \cite{GZk}. Indeed, beyond a sharp mean value estimate for such systems in the subcritical range, we only need a non-trivial bound for a derived mean value over a family of `shifting polynomials' analogous to the polynomials $p_j$ occurring in Lemma~\ref{J<H}. Owing to its genesis via shifting the variables, this derived mean value can be seen to be of the same general shape as the original one (but with lower degree), and therefore a similar mean value estimate can usually be applied, leading to a non-trivial bound as soon as at least one of the shifting polynomials is non-constant. Alternatively, one could also use square-root cancellation at the second moment (Plancherel's theorem) and interpolate with trivial bounds at $L^\infty$ to obtain cancellation at all moments greater than $2$. 
In this manner, our method gives a quantitative improvement for the number of solutions of inhomogeneous translation-invariant systems of equations over the corresponding bounds for their homogeneous analogues for all even moments in the subcritical range.  

A different direction for generalisations is that in which the variables $x_i, y_i$ are restricted to some subset $\calX \subseteq [-X,X]\cap \Z$. Fixing such a set $\calX$ as well as a set of shifting variables $\calH \subset [-X,X]\cap\Z$, put $J_{s,k}(\calX;\ba)$ for the number of $\bx, \by \in \calX^s$ satisfying \eqref{IV-sys}, and write $H_{s,k}(\calX, \calH;\ba)$ for the number of $\bx, \by \in (\calX+\calH)^s$ and $h \in \calH$ solving the system \eqref{s+a}. Then an inspection of the proof of Lemma \ref{J<H} shows that 
$$
	J_{s,k}(\calX;\ba) \ll |\calH|^{-1} H_{s,k}(\calX,\calH;\ba).
$$ 
Since moreover it follows from \cite[Theorem~1.2]{BDG} and \cite[Theorem~1.1]{W:NEC} that 
$$
	J_{s,k}(\calX;\bzer) \ll X^{\eps} (|\calX|^s + |\calX|^{2s-k(k+1)/2}),
$$ 
the same argument as above leads \emph{mutatis mutandis} to the following conclusion. 
\begin{theorem}\label{T-sets}
	Suppose that $k \ge 2$ and $\ba \in \Z^k \setminus \{\bzer\}$. Let $\ell\in \{1, \ldots, k\}$ be the smallest integer for which $a_{\ell} \neq 0$. Moreover, fix a set $\calX \subseteq [-X,X] \cap \Z$. Then for any integer $s < \frac12 k(k+1)$, any set $\calH \subseteq [-X,X]  \cap \Z$ and any $\eps > 0$ we have 
	\begin{align*}
		J_{s,k}(\calX; \ba)\ll X^\eps  |\calX+\calH|^{s}\left(|\calH|^{-1/2} + |\calH|^{- \eta_{s,k}(\ell) }\right),
	\end{align*} 
	where $\eta_{s,k}(\ell)$ is as in Theorem~\ref{MT}.
\end{theorem}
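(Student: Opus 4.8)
The plan is to follow the proof of Theorem~\ref{MT} verbatim, replacing the box $[-2X,2X]\cap\Z$ by the sumset $\calX+\calH$ and the shift range $[-X,X]\cap\Z$ by $\calH$, and substituting the restricted form of Vinogradov's mean value theorem for \eqref{VMVT} at the two places where the latter is invoked. The starting point is the set analogue of Lemma~\ref{J<H} already recorded in the text: shifting any solution $\bx,\by\in\calX^s$ of \eqref{IV-sys} by a value $h\in\calH$ produces a solution of \eqref{s+a} whose variables lie in $\calX+\calH$, and summing over the $|\calH|$ admissible shifts yields $J_{s,k}(\calX;\ba)\ll|\calH|^{-1}H_{s,k}(\calX,\calH;\ba)$. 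It therefore suffices to bound $H_{s,k}(\calX,\calH;\ba)$.

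First I would introduce the restricted exponential sums $\tilde f_k(\balp)=\sum_{x\in\calX+\calH}e(\alp_1 x+\cdots+\alp_k x^k)$ and $\tilde g_k(\balp)=\sum_{h\in\calH}e(\alp_1 p_1(h)+\cdots+\alp_k p_k(h))$, so that orthogonality gives $H_{s,k}(\calX,\calH;\ba)=\int_{\T^k}|\tilde f_k(\balp)|^{2s}\,\tilde g_k(-\balp)\,\d\balp$, in exact parallel with \eqref{H-int}. Applying H\"older's inequality with the same exponent $\sig$ as in \eqref{sig} separates this into $\bigl(\int_{\T^k}|\tilde f_k(\balp)|^{k(k+1)}\,\d\balp\bigr)^{2s/(k(k+1))}$ times $\bigl(\int_{\T^k}|\tilde g_k(\balp)|^{2\sig}\,\d\balp\bigr)^{1/(2\sig)}$.

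The two factors are then identified and estimated exactly as before. The first integral is $J_{k(k+1)/2,k}(\calX+\calH;\bzer)$, and the restricted mean value bound $J_{s,k}(\calX;\bzer)\ll X^\eps(|\calX|^s+|\calX|^{2s-k(k+1)/2})$ from \cite[Theorem~1.2]{BDG} and \cite[Theorem~1.1]{W:NEC} gives $\ll X^\eps|\calX+\calH|^{k(k+1)/2}$, the two terms coinciding at the critical exponent. For the second integral, the integer-linear change of variables on $\balp$ used in the proof of Theorem~\ref{MT} (exploiting \eqref{deg-p} together with periodicity) is purely formal and unaffected by the restriction of $h$ to $\calH$; it therefore reduces $\int_{\T^k}|\tilde g_k(\balp)|^{2\sig}\,\d\balp$ to $J_{\sig,k-\ell}(\calH;\bzer)$, to which the same restricted bound applies. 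Inserting both estimates, raising to the relevant powers (so that the first factor contributes $|\calX+\calH|^s$), and recalling $\tfrac{1}{4\sig}=\tfrac12\bigl(1-\tfrac{2s}{k(k+1)}\bigr)$ recovers the exponent $\eta_{s,k}(\ell)$; multiplying by the factor $|\calH|^{-1}$ from the set analogue of Lemma~\ref{J<H} then yields the claimed bound.

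The argument is almost entirely mechanical, so there is no substantial obstacle: the only genuinely new ingredient is the restricted form of Vinogradov's mean value theorem, which holds uniformly for arbitrary subsets of $[-X,X]\cap\Z$ and hence applies without change to both $\calX+\calH$ and $\calH$. The few points that require care are bookkeeping in nature — ensuring that the $X^\eps$ factors remain uniform since all relevant sets sit inside a box of length $\ll X$, and noting that the subadditivity step $(A+B)^{1/(2\sig)}\ll A^{1/(2\sig)}+B^{1/(2\sig)}$ is legitimate because $1/(2\sig)\le 1$ throughout the subcritical range $s<\tfrac12 k(k+1)$.
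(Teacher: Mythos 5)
Your proposal is correct and matches the paper's own argument exactly: the paper proves Theorem~\ref{T-sets} by combining the set analogue of Lemma~\ref{J<H} with the restricted mean value bound from \cite[Theorem~1.2]{BDG} and \cite[Theorem~1.1]{W:NEC}, then repeating the proof of Theorem~\ref{MT} \emph{mutatis mutandis} with $\calX+\calH$ and $\calH$ in place of the boxes $[-2X,2X]\cap\Z$ and $[-X,X]\cap\Z$. Your additional checks (uniformity of the $X^\eps$ factors, the exponent $1/(2\sig)\le 1$ in the subadditivity step, and the observation that the integer change of variables on $\balp$ is unaffected by restricting $h$ to $\calH$) are precisely the details the paper leaves implicit.
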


From the point of view of harmonic analysis, it would be of interest to have an analogue of Theorem~\ref{MT} involving systems of the type \eqref{IV-sys} in which each solution is counted by a complex weight. Such situations arise habitually in the context of restriction and extension operators. For a given complex-valued sequence $(\frc_n)_{n \in \Z}$ one defines the truncated extension operator along the moment curve $(t,t^2, \ldots, t^k) \subset \R^k$ via 
$$
	E_X\frc(\balp) = \sum_{|x| \le X} \frc_x e(\alp_1 x + \ldots + \alp_k x^k),
$$
so that $f_k(\balp;X) = E_X \mathbbm{1}(\balp)$.
In this notation, the conclusion of \cite[Theorem~1.2]{BDG} and \cite[Theorem~1.1]{W:NEC} reads 
$$
	\|E_X\frc \|_{L^{2s}(\T^k)} \ll X^{\eps} (1+X^{1/2-k(k+1)/(4s)})\|\frc\|_{\ell^2([-X,X])}.
$$

Consider the Fourier transform of $|E_X\frc(\balp)|^{2s}$, which is given by 
$$
	\Phi(\bn) = \int_{\T^k} |E_X \frc(\balp)|^{2s} e(-\balp \cdot \bn) \d \balp,
$$
so that in particular we have $	\|E_X \frc\|^{2s}_{L^{2s}(\T^k)} = \Phi(\bzer)$. Of course, it follows directly via the triangle inequality that $|\Phi(\bn)| \le \Phi(\bzer)$ for all $\bn \in \Z^k$, but it would be desirable to have a stronger inequality, and ideally one that exhibits a power saving over the trivial bound. In the special case when $\frc$ is the indicator function on a set $\calX \subseteq \Z \cap [-X,X]$, the results of Theorems \ref{MT} and \ref{T-sets} achieve such a power saving for  a large selection of $\bn$ and for all $s \in \N$ below the critical point. In fact, it seems not implausible that a careful modification of our arguments should apply also to a setting where the sequence $(\frc_x)_{x \in \Z}$ takes non-negative real values. Meanwhile, it is less obvious how the proof of Lemma \ref{J<H} would carry over to the general situation in which the weights $\frc$ may take complex values. 

Finally, we remark that the savings of $X^{1/2}$ in Theorem \ref{MT} is optimal within the methods. This can be seen by noting that the shifting and averaging operation creates one dummy variable, on which we can expect no more than square-root cancellation. The analytically inclined reader may also be interested to note that the results of \cite{BDG,W:NEC} require the underlying curve to have torsion, and this is what enables the strong bounds on mean values of the exponential sum $f_k(\balp;X)$. The curve underlying the exponential sum $g_k(\balp;X)$, however, given by $(p_1(t), \ldots, p_k(t))$, does not have torsion over $\R^k$, but it can be restricted to a $(k-\ell)$-dimensional subspace in which it has torsion. This gives a geometric motivation as to why our results are weaker as $\ell$ increases, and fail to give any improvement at all when $\ell$ is equal to $k$.

\bigskip 
\textbf{Acknowledgements.} During the production of this manuscript, the first author was supported by the Swedish Science Foundation (Vetenskapsr{\aa}det) via Starting Grant no.~2017-05110. The authors are grateful to Igor Shparlinski for useful discussions surrounding the topic of this paper.

\bibliographystyle{amsbracket}
\providecommand{\bysame}{\leavevmode\hbox to3em{\hrulefill}\thinspace}

\end{document}